\title{\vspace{-0.7cm} Ramsey-type problem for an almost monochromatic $K_4$}
\author{
Jacob Fox\thanks{Department of Mathematics, Princeton, Princeton,
NJ. Email: {\tt jacobfox@math.princeton.edu}. Research supported by
an NSF Graduate Research Fellowship and a Princeton Centennial
Fellowship.} \and Benny Sudakov\thanks{Department of Mathematics,
UCLA,  Los Angeles, CA 90095 and Institute for Advanced Study,
Princeton, NJ. Email: {\tt bsudakov@math.ucla.edu}. Research
supported in part by NSF CAREER award DMS-0546523, NSF grants
DMS-0355497 and DMS-0635607, by a USA-Israeli BSF grant, and by the
State of New Jersey.}}

\documentclass[11pt]{article}
\usepackage{amsfonts,amssymb,amsmath,latexsym}

\oddsidemargin  0pt \evensidemargin 0pt \marginparwidth 40pt
\marginparsep 10pt \topmargin -10pt \headsep 10pt \textheight 8.8in
\textwidth 6.7in

\newenvironment{proof}
      {\medskip\noindent{\bf Proof.}\hspace{1mm}}
      {\hfill$\Box$\medskip}

\def\qed{\ifvmode\mbox{ }\else\unskip\fi\hskip 1em plus 10fill$\Box$}

\newtheorem{theorem}{Theorem}[section]

\newtheorem{lemma}[theorem]{Lemma}
\newtheorem{proposition}[theorem]{Proposition}
\newtheorem{corollary}[theorem]{Corollary}

\begin{document}
\date{}

\maketitle

\begin{abstract}
In this short note we prove that there is a constant $c$ such that
 every $k$-edge-coloring of the complete graph $K_n$ with $n \geq 2^{ck}$ contains
a $K_4$ whose edges receive at most two colors. This improves on a
result of Kostochka and Mubayi, and is the first exponential bound
for this problem.
\end{abstract}

\section{Introduction}
The {\it Ramsey number} $R(t;k)$ is the least positive integer $n$
such that every $k$-coloring of the edges of the complete graph
$K_n$ contains a monochromatic $K_t$. Schur in 1916 showed that
$R(3;k)$ is at least exponential in $k$ and at most a constant times
$k!$. Despite various efforts over the past century to determine the
asymptotics of $R(t;k)$, there were only improvements in the
exponential constant in the lower bound and the constant factor in
the upper bound. It is a major open problem to determine whether or
not there is a constant $c$ such that $R(3;k) \leq c^k$ for all $k$
(see, e.g., the monograph \cite{GrRoSp}).

In 1981, Erd\H{o}s \cite{Er} proposed to study the following
generalization of the classical Ramsey problem. Let $p,q$ be
positive integers with $2 \leq q \leq {p \choose 2}$. A
$(p,q)$-coloring of $K_n$ is an edge-coloring such that every copy
of $K_p$ receives at least $q$ distinct colors. Let $f(n,p,q)$ be
the minimum number of colors in a $(p,q)$-coloring of $K_n$.
Determining the numbers $f(n,p,2)$ is equivalent to determining the
multicolor Ramsey numbers $R(p;k)$ as an edge-coloring is a
$(p,2)$-coloring if and only if it does not contain a monochromatic
$K_p$. Over the last two decades, the study of $f(n,p,q)$ drew a lot
of attention. Erd\H{o}s and Gy\'arf\'as \cite{ErGy} proved several
results on $f(n,p,q)$, e.g., they determined for which fixed $p$ and
$q$ we have $f(n,p,q)$ is at least linear in $n$, quadratic in $n$,
or ${n \choose 2}$ minus a constant. For fixed $p$, they also gave
bounds on the smallest $q$ for which $f(n,p,q)$ is asymptotically
${n \choose 2}$. These bounds were significantly tightened by
S\'ark\"ozy and Selkow \cite{SaSe1} using Szemer\'edi's Regularity
Lemma. In a different paper, S\'ark\"ozy and Selkow \cite{SaSe} show
that $f(n,p,q)$ is linear in $n$ for at most $\log p$ values of $q$.
(Here, and throughout the paper, all logarithms are base $2$.) There
are also results on the behavior of $f(n,p,q)$ for particular values
of $p$ and $q$. Mubayi \cite{Mu1} gave an explicit construction of
an edge-coloring which together with the already known lower bound
shows that $f(n,4,4) = n^{1/2+o(1)}$. Using Behrend's construction
of a dense set with no arithmetic progressions of length three,
Axenovich \cite{Ax} showed that $\frac{1+\sqrt{5}}{2}n-3 \leq
f(n,5,9) \leq 2n^{1+c/\sqrt{\log n}}$. These examples demonstrate
that special cases of $f(n,p,q)$ lead to many interesting problems.

As was pointed out by Erd\H{o}s and Gy\'arf\'as \cite{ErGy}, one of
the most intriguing problems among the small cases is the behavior
of $f(n,4,3)$. This problem can be rephrased in terms of another
more convenient function. Let $g(k)$ be the largest positive integer
$n$ for which there is a $k$-edge-coloring of $K_n$ in which every
$K_4$ receives at least three colors, i.e., for which $f(n,4,3) \leq
k$. Restated, $g(k)+1$ is the smallest positive integer $n$ for
which every $k$-edge-coloring of the edges of $K_n$ contains a $K_4$
that receives at most two colors. In 1981, by an easy application of
the probabilistic method, Erd\H{o}s \cite{Er} showed that $g(k)$ is
superlinear in $k$. Later, Erd\H{o}s and Gy\'arf\'as used the
Lov\'asz Local Lemma to show that $g(k)$ is at least quadratic in
$k$. Mubayi \cite{Mu} improved these bounds substantially, showing
that $g(k) \geq 2^{c(\log k)^2}$ for some absolute positive constant
$c$. On the other hand, the progress on the upper bound was much
slower. Until very recently, the best result was of the form
$g(k)<k^{ck}$ for some constant $c$, which follows trivially from
the multicolor $k$-color Ramsey number for $K_4$. This bound was
improved by Kostochka and Mubayi \cite{KoMu}, who showed that
$g(k)<(\log k)^{ck}$ for some constant $c$. Here we further extend
their neat approach and obtain the first exponential upper bound for
this problem.

\begin{theorem}\label{main} For $k>2^{100}$, we have $g(k) <2^{2000k}$.
\end{theorem}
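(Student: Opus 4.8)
The plan is to prove, by induction on $k$, the equivalent statement that no $k$-edge-coloring of $K_n$ with $n\geq 2^{2000k}$ can have every $K_4$ receive at least three colors; the theorem follows at once, with the small cases and the cases where $n$ is only slightly below $2^{2000k}$ checked directly. For a vertex $v$ and a color $c$, write $N_c(v)$ for the set of vertices joined to $v$ by a color-$c$ edge.

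The first step is to record the local structure forced by the hypothesis. Fix $v,c$ and put $A=N_c(v)$. If some $x\in A$ had two color-$c$ neighbors $y,z\in A$, the $K_4$ on $\{v,x,y,z\}$ would carry the five edges $vx,vy,vz,xy,xz$ in color $c$, hence at most two colors; so color $c$ spans only a matching inside $A$. If $x,y,z\in A$ formed a monochromatic triangle in some color $c'$, then $\{v,x,y,z\}$ would receive only the colors $c$ and $c'$; so $A$ has no monochromatic triangle. Deleting one endpoint of each color-$c$ matching edge from a large $A$ therefore leaves at least $|A|/2$ vertices spanning at most $k-1$ colors, with every $K_4$ still in at least three colors --- the prototype reduction, but a lossy one.

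The engine of the argument is a more flexible reduction. If disjoint sets $X,Y$ with $|X|,|Y|\geq 2$ have all of their crossing edges in one color $c$, then for every $x_1\neq x_2\in X$ and $y_1\neq y_2\in Y$ the $K_4$ on $\{x_1,x_2,y_1,y_2\}$ forces $c(x_1x_2)\neq c$, $c(y_1y_2)\neq c$ and $c(x_1x_2)\neq c(y_1y_2)$; hence $G[X]$ and $G[Y]$ both avoid $c$ and use disjoint palettes, so one of them spans at most $(k-1)/2$ colors. More generally, if a set $S$ has a common color-$c$ neighborhood $U$, with $|S|,|U|\geq 2$, then $G[U]$ avoids $c$ together with every color present on $G[S]$. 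Since the densest color class has at least $\binom n2/k$ edges, a convexity count of K\H{o}v\'ari--S\'os--Tur\'an type produces such configurations --- a monochromatic complete bipartite pair, or a small set $S$ with a large common monochromatic neighborhood $U$ --- of controlled size; passing to $U$, or to whichever of $X,Y$ has fewer colors, discards one or more colors while losing only a bounded factor in the number of vertices. Iterating this peels the colors off in batches, and at the end one uses the inductive hypothesis in the form: a sub-coloring on $m$ vertices with every $K_4$ in at least three colors must use more than $(\log m)/2000$ colors.

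The crux is the accounting. The crude reductions are far too wasteful: restricting to the majority-color neighborhood of a vertex costs a factor of order $k$ per color removed, which yields only $g(k)<k^{O(k)}$, no better than the trivial bound from $R(4;k)$; even peeling two colors via the common neighborhood of a pair costs a factor of order $k^2$. To obtain $2^{O(k)}$ the peeling must be arranged so that the \emph{total} loss over the roughly $k$ colors removed is $2^{O(k)}$ --- an amortized factor $2^{O(1)}$ per color --- by choosing each step to discard enough colors to pay for the vertices it costs, which is exactly where one exploits that short sub-colorings are already quite rainbow (no monochromatic $K_4$, plus the inductive bound on $g$). Balancing this trade-off, and checking that the sizes produced clear the inductive threshold --- which is what pins down the constant $2000$ and the hypothesis $k>2^{100}$ needed to absorb lower-order terms --- is the part I expect to be the real obstacle; the structural facts above are routine by comparison.
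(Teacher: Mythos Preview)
Your structural observations are correct --- in particular that if $S$ has a common color-$c$ neighborhood $U$ with $|S|,|U|\ge 2$, then $G[U]$ avoids $c$ together with every color used inside $S$ --- and you have correctly located the crux: colors must be removed at an amortized cost of a factor $2^{O(1)}$ per color. But the proposal stops at that diagnosis, and the tools you invoke do not close the gap. A K\H{o}v\'ari--S\'os--Tur\'an or convexity count produces \emph{some} $d$-set $S$ with a large monochromatic common neighborhood, but gives no control over how many colors appear on $G[S]$; if $S$ happens to be nearly monochromatic you kill only $O(1)$ colors for a loss polynomial in $k$, and iterating recovers nothing better than $k^{O(k)}$. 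Your disjoint-palettes idea would need a large balanced monochromatic $K_{a,a}$, and a color class of density $1/k$ need not contain one. Finally, the inductive bound on $g$ applied to a $d$-set $S$ says only that $G[S]$ uses $\gtrsim \log d$ colors, not the $\binom{d}{2}$ you need; ``no monochromatic $K_4$'' is weaker still.

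What the paper supplies, and what is genuinely missing from your plan, is the following. First, introduce the auxiliary quantity $g(k,t)$: the largest $n$ admitting a $k$-coloring of $K_n$ with no rainbow $K_t$ and no $2$-colored $K_4$; since a $k$-coloring cannot contain a rainbow $K_k$, one has $g(k)=g(k,k)$. Second, the absence of a rainbow $K_t$ forces some color $i$ to have large degree from a set $V_1$ of $\ge n/(2k)$ vertices; now apply \emph{dependent random choice} (not a convexity count) to the color-$i$ graph to extract a large $V_2\subset V_1$ in which \emph{every} $d$-tuple has common color-$i$ neighborhood of size at least $2^{-\binom{d}{2}}n$. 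Third, run a dichotomy on $V_2$: either it contains a \emph{rainbow} $K_d$, whose common neighborhood then uses at most $k-\binom{d}{2}$ colors, giving $g(k,t)\le 2^{\binom{d}{2}}g\bigl(k-\binom{d}{2},t\bigr)$ --- exactly a factor $2$ per color removed; or it contains no rainbow $K_d$, whence $|V_2|\le g(k,d)$ with $d\ll t$. Iterating along $t_1=k$, $t_{i+1}=(\log t_i)^2$ down to a bounded $t_\ell$, and closing with the easy pigeonhole bound $g(k,d)<d^{12k}$, yields the theorem. The two ideas you are missing are: insist that $S$ be a rainbow $K_d$ so that $\binom{d}{2}$ colors drop out at once, and use dependent random choice so that \emph{every} $d$-tuple in a large set already has a large common neighborhood, which is what lets you locate the rainbow $K_d$ there (or else bound the set by $g(k,d)$).
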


While it is a longstanding open problem to determine whether or not
$R(t;k)$ grows faster than exponential in $k$, it is not difficult
to prove an exponential upper bound if we restrict the colorings to
those that do not contain a rainbow $K_s$ for fixed $s$. Let
$M(k,t,s)$ be the minimum $n$ such that every $k$-edge-coloring of
$K_n$ has a monochromatic $K_t$ or a rainbow $K_s$. Axenovich and
Iverson \cite{AxIv} showed that $M(k,t,3) \leq 2^{kt^2}$. We improve
 on their bound by showing that $M(k,t,s)
\leq s^{4kt}$ for all $k,t,s$. In the other direction, we prove that
for all positive integers $k$ and $t$ with $k$ even and $t \geq 3$,
$M(k,t,3) \geq 2^{kt/4}$, thus determining $M(k,t,3)$ up to a
constant factor in the exponent.

The rest of this paper is organized as follows. In the next section,
we prove our main result, Theorem \ref{main}. In Section
\ref{monorainbow}, we study the Ramsey problem for colorings without
rainbow $K_s$. The last section of this note contains some
concluding remarks. Throughout the paper, we systematically omit
floor and ceiling signs whenever they are not crucial for the sake
of clarity of presentation. We also do not make any serious attempt
to optimize absolute constants in our statements and proofs.

\section{Proof of Theorem \ref{main}}

Our proof develops further on ideas in \cite{KoMu}. Like the
Kostochka-Mubayi proof, we show that the $K_4$ we find is
monochromatic or is a $C_4$ in one color and a matching in the other
color. Call a coloring of $K_t$ {\it rainbow} if all ${t \choose 2}$
edges have different colors. Let $g(k,t)$ be the largest positive
integer $n$ such that there is a $k$-edge-coloring of $K_n$ with no
rainbow $K_t$, and in which the edges of every $K_4$ have at least
three colors. We will study $g(k)$ by investigating the behavior of
$g(k,t)$.

Before jumping into the details of the proof of Theorem \ref{main},
we first outline the proof idea. Note that $g(k)=g(k,k)$ for $k>2$
as a rainbow $K_k$ would use ${k \choose 2}>k$ colors. We give a
recursive upper bound on $g(k,t)$ which implies Theorem \ref{main}.
We first prove a couple of lemmas which show that in any
edge-coloring without a rainbow $K_t$, there are many vertices that
have large degree in some color $i$. We then apply a simple
probabilistic lemma to find a large subset $V_2$ of vertices such
that every vertex subset of size $d$ (with $d\ll t$) has many common
neighbors in color $i$. We use this to get an upper bound on
$g(k,t)$ as follows. Consider a $k$-edge-coloring of $K_n$ with
$n=g(k,t)$ without a rainbow $K_t$ and with every $K_4$ containing
at least three colors. There are two possible cases. If there is no
rainbow $K_d$ in the set $V_2$, then we obtain an upper on $g(k,t)$
using the fact that $|V_2|$ has size at most $g(k,d)$. If there is a
set $R \subset V_2$ of $d$ vertices which forms a rainbow $K_d$,
then the ${d \choose 2}$ colors that appear in this rainbow $K_d$
cannot appear in the edges inside the set $N_i(R)$ of vertices that
are adjacent to every vertex in $R$ in color $i$, for otherwise we
would obtain a $K_4$ having at most two colors (the color $i$ and
the color that appears in both $R$ and in $N_i(R)$). In this case we
obtain an upper bound on $g(k,t)$ using the fact that $|N_i(R)| \leq
g(k-{d \choose 2},t)$. Finally, if the coloring has no rainbow $K_d$
with $d$ constant, it is easy to show an exponential upper bound.

For an edge-coloring of $K_n$, a vertex $x$, and a color $i$, let
$d_i(x)$ denote the degree of vertex $x$ in color $i$. Our first
lemma shows that if, for every vertex $x$ and color $i$, $d_i(x)$ is
not too large, then the coloring contains many rainbow cliques.

\begin{lemma}\label{useful1}
If an edge-coloring of the complete graph $K_n$ satisfies $d_i(x)
\leq \delta n$ for each $x \in V(K_n)$ and each color $i$, then this
coloring has at most $\frac{5}{8}\delta t^{4} {n \choose t}$
non-rainbow copies of $K_t$.
\end{lemma}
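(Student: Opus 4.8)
The plan is a double-counting argument. Every non-rainbow copy of $K_t$ contains two edges of the same color, and any two edges either share a vertex --- forming a monochromatic ``cherry'' on $3$ vertices --- or are vertex-disjoint, forming a monochromatic matching of size two on $4$ vertices. A fixed cherry lies in at most $\binom{n-3}{t-3}$ copies of $K_t$, and a fixed monochromatic $2$-matching lies in at most $\binom{n-4}{t-4}$ copies, so the number of non-rainbow copies of $K_t$ is at most $N_1\binom{n-3}{t-3}+N_2\binom{n-4}{t-4}$, where $N_1$ is the number of monochromatic cherries and $N_2$ the number of monochromatic $2$-matchings in the coloring.

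Next I would bound $N_1$ and $N_2$ using the degree hypothesis. For cherries, $N_1=\sum_x\sum_i\binom{d_i(x)}{2}$; since $d_i(x)\le\delta n$ and $\sum_i d_i(x)=n-1$, we have $\sum_i\binom{d_i(x)}{2}\le\frac{\delta n-1}{2}\sum_i d_i(x)$, and summing over $x$ gives $N_1\le\frac12\delta n^2(n-1)$. For $2$-matchings, writing $e_i$ for the number of edges of color $i$, we have $N_2\le\sum_i\binom{e_i}{2}\le\frac12\bigl(\max_i e_i\bigr)\sum_i e_i$; since $e_i=\frac12\sum_x d_i(x)\le\frac12\delta n^2$ and $\sum_i e_i=\binom n2$, this gives $N_2\le\frac18\delta n^3(n-1)$.

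Finally I would substitute these bounds together with the identities $\binom{n-3}{t-3}=\binom nt\frac{t(t-1)(t-2)}{n(n-1)(n-2)}$ and $\binom{n-4}{t-4}=\binom nt\frac{t(t-1)(t-2)(t-3)}{n(n-1)(n-2)(n-3)}$, which bounds the number of non-rainbow copies of $K_t$ by
\[
\binom nt\left(\frac{\delta}{2}\cdot\frac{n\,t(t-1)(t-2)}{n-2}+\frac{\delta}{8}\cdot\frac{n^2\,t(t-1)(t-2)(t-3)}{(n-2)(n-3)}\right).
\]
We may assume $n\ge t$ (otherwise $K_n$ contains no $K_t$ at all), so $\frac{n}{n-2}\le\frac{t}{t-2}$ and $\frac{n^2}{(n-2)(n-3)}\le\frac{t^2}{(t-2)(t-3)}$; the bracketed quantity is then at most $\delta\bigl(\frac12 t^3+\frac18 t^4\bigr)\le\frac58\delta t^4$, the last step being just $t^3\le t^4$. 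The small cases $t\le 3$, where the matching term is vacuous, follow directly from the bound on $N_1$. The computation is routine; the one thing to watch is that the target constant $\frac58$ genuinely comes out, which forces one to keep the binomial ratios exact and to use $n\ge t$ to absorb the lower-order corrections rather than bounding crudely --- for large $t$ there is in fact ample slack, so no optimization of constants is needed.
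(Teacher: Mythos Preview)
Your proof is correct and follows essentially the same approach as the paper's: split non-rainbow $K_t$'s according to whether the repeated color appears on a cherry or a $2$-matching, bound the number of cherries and $2$-matchings using the degree hypothesis, and multiply by $\binom{n-3}{t-3}$ and $\binom{n-4}{t-4}$ respectively. The only cosmetic differences are that the paper invokes convexity of $\binom{y}{2}$ where you use $\binom{d_i(x)}{2}\le\frac{\delta n-1}{2}d_i(x)$ directly, and the paper bounds $\binom{n-j}{t-j}\le(t/n)^j\binom{n}{t}$ immediately rather than carrying exact ratios and then using $n\ge t$; both routes land on the same $\frac12\delta t^3+\frac18\delta t^4$ estimate.
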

\begin{proof}
If a $K_t$ is not rainbow, then it has two adjacent edges of the
same color or two nonadjacent edges of the same color. We will use
this fact to give an upper bound on the number of $K_t$s that are
not rainbow.

Let $\nu(i,t,n)$ be the number of copies of $K_t$ in $K_n$ in which
there is a vertex in at least two edges of color $i$. We can first
choose the vertex, and then the two edges with color $i$. Hence, the
number of $K_t$s for which there is a vertex with degree at least
two in some color is at most \begin{eqnarray*} \sum_{i} \nu(i,t,n) &
\leq & \sum_{i} \sum_{x \in V} {d_i(x) \choose 2}{n-3 \choose t-3}
\leq  n\delta^{-1}{\delta n \choose 2}{n-3 \choose t-3} \\ & \leq &
\frac{\delta n^3}{2}\left(\frac{t}{n}\right)^3{n \choose t} =
\frac{1}{2}\delta t^3{n \choose t},\end{eqnarray*} where we used
that $d_i(x) \leq \delta n$ together with the convexity of the
function $f(y)={y \choose 2}$.

Let $\psi(i,t,n)$ be the number of copies of $K_t$ in $K_n$ in which there
is a matching of size at least two in color $i$. Let $e_i$ denote
the number of edges of color $i$. Since $$e_i \leq
\frac{n}{2}\max_{x \in V} d_i(x) \leq \frac{\delta}{2} n^2,$$ then
the number of $K_t$s in which there is a matching of size
at least two in some color is at most
$$\sum_{i} \psi(i,t,n) \leq \sum_{i} {e_i \choose 2}{n-4 \choose
t-4} \leq \delta^{-1}{\delta n^2/2 \choose 2}{n-4 \choose t-4} \leq
\frac{\delta t^4}{8}{n \choose t},$$ where again we used the
convexity of the function $f(y)={y \choose 2}$. Hence, the number of
$K_t$s which are not rainbow is at most $\frac{1}{2}\delta t^3{n
\choose t}+\frac{1}{8}\delta t^4{n \choose t} \leq \frac{5}{8}\delta
t^4{n \choose t}$, completing the proof.
\end{proof}

For the proof of Theorem \ref{main}, we do not need the full
strength of this lemma since we will only use the existence of at
least one rainbow $K_t$. We also would like to mention the following
stronger result. Call an edge-coloring {\it $m$-good} if each color
appears at most $m$ times at each vertex. Let $h(m,t)$ denote the
minimum $n$ such that every $m$-good edge-coloring of $K_n$ contains
a rainbow $K_t$. The above lemma demonstrates that $h(m,t)$ is at
most $mt^4$. It is shown by Alon, Jiang, Miller, and Pritikin
\cite{AlJiMiPr} that there are constant positive constants $c_1$ and
$c_2$ such that
$$c_1mt^3/\log t \leq h(m,t) \leq c_2mt^3/\log t.$$

The following easy corollary of Lemma \ref{useful1} demonstrates
that in every $k$-edge-coloring without a rainbow $K_t$, there is a
color and a large set of vertices which have large degree in that
color.

\begin{corollary}\label{veryuseful}
In every $k$-edge-coloring of $K_n$ without a rainbow $K_t$, there
is a subset $V_1 \subset V(K_n)$ with $|V_1| \geq \frac{n}{2k}$ and
a color $i$ such that $d_i(x) \geq \frac{n}{2t^4}$ for each vertex
$x \in V_1$.
\end{corollary}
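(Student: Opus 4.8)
The plan is to derive the corollary from Lemma \ref{useful1} by contradiction, after localizing to a suitable induced complete subgraph. Call a vertex $x$ \emph{good} if there is a color $i$ with $d_i(x)\geq \frac{n}{2t^4}$, and \emph{bad} otherwise, so a bad vertex satisfies $d_i(x)<\frac{n}{2t^4}$ for every color $i$; let $G$ and $B$ denote the sets of good and bad vertices. We may assume $n\geq 2t^4$, since otherwise $\frac{n}{2t^4}<1$ and, as every vertex lies in at least one edge, every vertex is good and we may take $V_1=V(K_n)$. The first and main step is to prove that $|G|\geq n/2$.

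Suppose not, and set $m:=|B|>n/2\geq t^4\geq t$. Restrict the coloring to the complete graph on $B$, a copy of $K_m$. For every $x\in B$ and every color $i$, the degree of $x$ in color $i$ measured inside $B$ is at most $d_i(x)<\frac{n}{2t^4}<\frac{m}{t^4}$, the last inequality being just $n<2m$. Hence this $k$-edge-coloring of $K_m$ meets the hypothesis of Lemma \ref{useful1} with $\delta=1/t^4$, so it has at most $\frac{5}{8}\cdot\frac{1}{t^4}\cdot t^4{m\choose t}=\frac{5}{8}{m\choose t}<{m\choose t}$ non-rainbow copies of $K_t$. Therefore some copy of $K_t$ inside $B$ is rainbow, contradicting the assumption that $K_n$ has no rainbow $K_t$. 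This gives $|G|\geq n/2$.

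To finish, assign to each good vertex $x$ a color $i_x$ witnessing $d_{i_x}(x)\geq \frac{n}{2t^4}$. Since only $k$ colors are used, by the pigeonhole principle some color $i$ is assigned to at least $|G|/k\geq \frac{n}{2k}$ of the good vertices; letting $V_1$ be that set of vertices yields $|V_1|\geq \frac{n}{2k}$ and $d_i(x)\geq \frac{n}{2t^4}$ for all $x\in V_1$, as desired.

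The only real obstacle is that Lemma \ref{useful1} is a \emph{global} statement: it needs the bound $d_i(x)\leq\delta n$ at \emph{every} vertex, whereas the hypothesis only tells us the color degrees are small at the bad vertices. The resolution is exactly the localization above --- restrict to the complete graph on $B$, where the degree bound transfers verbatim --- and the factor $2$ lost in the estimate $n<2m$ is harmless because the constant $\frac{5}{8}$ in Lemma \ref{useful1} already leaves room to spare below $1$. I do not expect any other difficulty.
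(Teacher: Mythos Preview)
Your proof is correct and follows essentially the same route as the paper's: restrict to the set $B$ of bad vertices, apply Lemma~\ref{useful1} with $\delta=t^{-4}$ to force a rainbow $K_t$ and derive a contradiction, conclude that $|G|\geq n/2$, and finish by pigeonhole on the witnessing colors. One tiny slip in the trivial case $n<2t^4$: you cannot simply take $V_1=V(K_n)$, since different good vertices may be witnessed by different colors; instead just observe $|G|=n\geq n/2$ and jump to your final pigeonhole paragraph.
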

\begin{proof}
Let $V' \subset V(K_n)$ be those vertices $x$ for which there is a
color $i$ such that $d_i(x) \geq \frac{n}{2t^4}$.

Case 1: $|V'| <n/2$. In this case, letting $V''=V(K_n) \setminus
V'$, $|V''| \geq n/2$ and no vertex in $V''$ has degree at least
$\frac{n}{2t^4} \leq |V''|/t^4$ in any given color. By Lemma
\ref{useful1} applied to the coloring of $K_n$ restricted to $V''$
with $\delta=t^{-4}$, there are at least $\frac{3}{8}{|V''| \choose
t}$ rainbow $K_t$s, contradicting the assumption that the coloring
is free of rainbow $K_t$s.

Case 2: $|V'| \geq n/2$. In this case, by the pigeonhole principle,
there is a color $i$ and at least $\frac{n}{2k}$ vertices $x$ for
which $d_i(x) \geq \frac{n}{2t^4}$, completing the proof.
\end{proof}

The following lemma is essentially the same as results in
\cite{KoRo} and \cite{Su}. Its proof uses a probabilistic argument
commonly referred to as dependent random choice, which appears to be
a powerful tool in proving various results in Ramsey theory (see,
e.g., \cite{FoSu} and its references). In a graph $G$, the {\it
neighborhood} $N(v)$ of a vertex $v$ is the set of vertices adjacent
to $v$. For a vertex subset $U$ of a graph $G$, the {\it common
neighborhood} $N(U)$ is the set of vertices adjacent to all vertices
in $U$.

\begin{lemma}\label{drc}
Let $G=(V,E)$ be a graph with $n$ vertices and $V_1 \subset V$ be a
subset with $|V_1|=m$ in which each vertex has degree at least
$\alpha n$. If $\beta\leq m^{-d/h}$, then there is a subset $V_2
\subset V_1$ with $|V_2| \geq \alpha^h m -1$ such that every
$d$-tuple in $V_2$ has at least $\beta n$ common neighbors.
\end{lemma}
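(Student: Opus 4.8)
The plan is to use dependent random choice. The idea is to pick a random set $T$ of $h$ vertices (with repetition) from the whole vertex set $V$, and to let $V_2$ consist of those vertices in $V_1$ that are adjacent to every vertex of $T$. Since each vertex of $V_1$ has degree at least $\alpha n$, the probability that a fixed vertex $x \in V_1$ lands in $V_2$ is at least $(\alpha n / n)^h = \alpha^h$, so the expected size of $V_2$ is at least $\alpha^h m$. The catch is that $V_2$ might contain a $d$-tuple with few common neighbors, so we need to delete such "bad" $d$-tuples.

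First I would set up the random experiment precisely: choose $x_1,\dots,x_h$ independently and uniformly from $V$ (with repetition allowed — the slight loss from repeated choices is harmless and accounts for the $-1$ in the conclusion), and let $V_2 = V_1 \cap N(\{x_1,\dots,x_h\})$. Call a $d$-tuple $S \subseteq V_1$ \emph{bad} if $|N(S)| < \beta n$. For a fixed bad $d$-tuple $S$, the probability that $S \subseteq V_2$ is at most $\big(|N(S)|/n\big)^h < \beta^h$. By linearity of expectation, the expected number of bad $d$-tuples contained in $V_2$ is at most $\binom{m}{d}\beta^h \le m^d \beta^h$. Combining, the expected value of $|V_2| - \#\{\text{bad } d\text{-tuples in } V_2\}$ is at least $\alpha^h m - m^d \beta^h$.

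Next I would use the hypothesis $\beta \le m^{-d/h}$, which gives $\beta^h \le m^{-d}$, hence $m^d \beta^h \le 1$. So there is a choice of $x_1,\dots,x_h$ for which $|V_2|$ minus the number of bad $d$-tuples it contains is at least $\alpha^h m - 1$. Fix such a choice, and for each bad $d$-tuple contained in $V_2$ delete one of its vertices; let $V_2'$ be the resulting set. Then $|V_2'| \ge \alpha^h m - 1$ and $V_2'$ contains no bad $d$-tuple, i.e. every $d$-tuple in $V_2'$ has at least $\beta n$ common neighbors. Renaming $V_2'$ as $V_2$ finishes the argument.

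The main thing to be careful about is the bookkeeping around the deletion step and the "$-1$" in the conclusion: one must check that after removing at most one vertex per bad $d$-tuple, what remains still has the claimed size, and that every surviving $d$-tuple was already good (which is automatic, since $N(S)$ only depends on $S$, not on which other vertices survive). There is no serious obstacle here — the only quantitative input is the inequality $\beta^h \le m^{-d}$, which is exactly the hypothesis, and everything else is a routine first-moment computation. A minor point worth a remark is that sampling with repetition is what makes the probability bound clean; the alternative of sampling without repetition would introduce lower-order correction factors that the stated bound absorbs into the $-1$.
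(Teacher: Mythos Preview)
Your proof is correct and essentially identical to the paper's: both pick $h$ uniform random vertices from $V$ (with repetition), set the candidate set to $V_1 \cap N(U)$, lower-bound its expected size by $\alpha^h m$, upper-bound the expected number of bad $d$-tuples by $m^d\beta^h \le 1$ via the hypothesis $\beta \le m^{-d/h}$, and then delete one vertex per surviving bad tuple. One small correction to your commentary: the $-1$ in the conclusion arises from the bound $\mathbb{E}[Z]\le 1$ on the number of bad $d$-tuples (and the subsequent deletions), not from any loss incurred by sampling with repetition as your parenthetical remark suggests.
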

\begin{proof}
Let $U=\{x_1,\ldots,x_h\}$ be a subset of $h$ random vertices from
$V$ chosen uniformly with repetitions, and let $V_1'=N(U) \cap V_1$.
We have
$$\mathbb{E}[|V_1'|] =\sum_{v \in V_1}{\bf Pr}(v \in
N(U))=\sum_{v \in V_1}\left(\frac{|N(v)|}{n}\right)^h \geq
\alpha^{h}m.$$

The probability that a given set $W\subset V_1$ of vertices is
contained in $V_1'$ is $\left(\frac{|N(W)|}{n}\right)^h$. Let $Z$
denote the number of $d$-tuples in $V_1'$ with less than $\beta n$
common neighbors. So $$\mathbb{E}[Z]=\sum_{W \subset V_1,
|W|=d,|N(W)|<\beta n} {\bf Pr}(W \subset V_2) \leq {m \choose
d}\beta^h \leq m^d \beta^h \leq 1.$$ Hence, the expectation of
$|V_1'|-Z$ is at least $\alpha^h m-1$ and thus, there is a choice
$U_0$ for $U$ such that the corresponding value of $|V_1'|-Z$ is at
least $\alpha^h m - 1$. For every $d$-tuple $D$ of vertices of
$V_1'$ with less than $\beta n$ common neighbors, delete a vertex
$v_D \in D$ from $V_1'$. Letting $V_2$ be the resulting set, it is
clear that $V_2$ has the desired properties, completing the proof.
\end{proof}

The proof of the next lemma uses the standard pigeonhole argument
together with Lemma \ref{useful1}.

\begin{lemma}\label{lastpiece}
Let $d,k$ be integers with $d,k \geq 2$. Then every
$k$-edge-coloring of $K_n$ with $n \geq d^{12k}$ and without a
rainbow $K_d$ has a monochromatic $K_4$. In particular, we have
$g(k,d) < d^{12k}$.
\end{lemma}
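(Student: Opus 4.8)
The plan is to build the monochromatic $K_4$ greedily, one vertex at a time, using Lemma~\ref{useful1} as the engine and a pigeonhole step at the end. The point of Lemma~\ref{useful1} for us is its contrapositive: in any $k$-edge-coloring of $K_m$ with $m\ge d$ and no rainbow $K_d$, it cannot happen that $d_i(x)\le \delta m$ for every vertex $x$ and every color $i$ as soon as $\frac58\delta d^4<1$; taking $\delta=d^{-4}$, this says there is always a vertex $x$ and a color $i$ with $d_i(x)>m/d^4$. (One could equally run the argument off Corollary~\ref{veryuseful}, at the cost of an extra factor $2k$ per step, but Lemma~\ref{useful1} is cleaner here.)

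Starting from $W_{-1}=V(K_n)$, I would repeatedly apply this observation to the coloring induced on the current set $W_{t-1}$ (which still has no rainbow $K_d$): choose $x_t\in W_{t-1}$ and a color $i_t$ with more than $|W_{t-1}|/d^4$ neighbors of $x_t$ in color $i_t$ inside $W_{t-1}$, and set $W_t=N_{i_t}(x_t)\cap W_{t-1}$. Then $|W_t|>|W_{t-1}|/d^4$, so $|W_t|>n/d^{4(t+1)}$, and the sets are nested. The key structural feature is that for $a<b$ one has $x_b\in W_{b-1}\subseteq W_a\subseteq N_{i_a}(x_a)$, so the edge $x_ax_b$ has color $i_a$; thus on the chosen vertices the color of an edge is determined by its smaller endpoint.

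I would carry this out for $t=0,1,\ldots,2k$, obtaining colors $i_0,\ldots,i_{2k}\in\{1,\ldots,k\}$, and then pick one further vertex $x_{2k+1}\in W_{2k}$. By pigeonhole (since $2k+1>2k$), some color $c$ occurs at least three times among $i_0,\ldots,i_{2k}$, say at indices $a_1<a_2<a_3\le 2k$. Then $\{x_{a_1},x_{a_2},x_{a_3},x_{2k+1}\}$ spans a monochromatic $K_4$ in color $c$: each edge among $x_{a_1},x_{a_2},x_{a_3}$ has the color of its smaller endpoint, which is $c$, and each edge $x_{a_p}x_{2k+1}$ has color $i_{a_p}=c$ since $a_p<2k+1$. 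Hence a $k$-edge-coloring of $K_n$ with no rainbow $K_d$ in which every $K_4$ has at least three colors cannot exist once $n\ge d^{12k}$, which gives $g(k,d)<d^{12k}$.

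I do not expect a genuine obstacle; the main thing that needs care is the size bookkeeping. The process needs $|W_{t-1}|\ge d$ to invoke Lemma~\ref{useful1} for each $t\le 2k$ and $|W_{2k}|\ge 1$ to pick $x_{2k+1}$; since $|W_{t-1}|>n/d^{4t}$ and we go only up to $t=2k+1$, both hold with room to spare once $n\ge d^{8k+4}$, and $d^{8k+4}\le d^{12k}$ for $k\ge 1$. The one combinatorial point worth stating explicitly is that one needs $2k+1$ colors — not $2k$ — to force a color to repeat three times, so the greedy process must run for $2k+2$ vertices in all.
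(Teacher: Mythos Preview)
Your proposal is correct and follows essentially the same argument as the paper: repeatedly apply Lemma~\ref{useful1} (with $\delta=d^{-4}$) inside a shrinking neighborhood to produce $2k+1$ color-labeled vertices plus one final vertex, then pigeonhole a color appearing three times to extract a monochromatic $K_4$. Your write-up is in fact slightly cleaner than the paper's in its indexing and in making the size bookkeeping and the $2k+1$-versus-$2k$ pigeonhole count explicit.
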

\begin{proof}
Suppose for contradiction that there is a $k$-edge-coloring of $K_n$
with $n \geq d^{12k}$ and without a rainbow $K_d$ and without a
monochromatic $K_4$. By Lemma \ref{useful1} with $t=d$ and
$\delta=d^{-4}$, this graph contains a vertex $x_1$ with degree at
least $\frac{n}{d^4}$ in some color $c_1$. Pick this vertex $x_1$
out and let $N_1$ be the set of vertices adjacent to $x_1$ by color
$c_1$. We will define a sequence $x_1,\ldots,x_{2k+1}$ of vertices,
a sequence $c_1,\ldots,c_{2k+1}$ of colors, and a sequence $V(K_n)
\supset N_1 \supset \ldots \supset N_{2k+1}$ of vertex subsets. Once
$x_j$, $c_j$, and $N_j$ have been defined, pick a vertex $x_{j+1}$
in $N_{j}$ in at least $\frac{|N_j|}{d^4}$ edges in some color
$c_{j+1}$ with other vertices in $N_j$. Pick this vertex $x_{j+1}$
out and let $N_{j+1}$ be the set of vertices in $N_j$ that are
adjacent to $x_j$ by color $c_j$. Note that $|N_{j+1}| \geq
d^{-4}|N_j|$ so
$$|N_{2k+1}| \geq (d^{-4})^{2k+1}n \geq 1.$$ Therefore, there is a
color $c$ such that $c$ is represented at least three times in the
list $c_1,\ldots,c_{2k+1}$ and the three vertices
$x_{j_1},x_{j_2},x_{j_3}$ together with a vertex from $N_{2k+1}$
form a monochromatic $K_4$ in color $c$, where
$c_{j_1}=c_{j_1}=c_{j_3}=c$ with $j_1<j_2<j_3$.
\end{proof}

\begin{lemma}\label{mainhelp} Let $d,k,t$ be positive integers
with $3 \leq d \leq t$ and $d \geq 40\log t$. If $k \geq {d \choose
2}$, then
\begin{equation}\label{eq} g(k,t) \leq
\max\left(4kg(k,t)^{\frac{20\log t}{d}}g(k,d),2^{{d \choose
2}}g\Big(k-\left(d \atop 2\right),t\Big)\right).\end{equation}
Otherwise, we have $g(k,t)=g(k,d)$.
\end{lemma}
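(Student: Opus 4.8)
The plan is to treat the lemma's two cases separately; the case $k<{d\choose 2}$ is a one-line observation, while essentially all of the work lies in the case $k\ge{d\choose 2}$. If $k<{d\choose 2}$, then no $k$-edge-coloring of any complete graph contains a rainbow $K_d$, since a rainbow $K_d$ uses ${d\choose 2}>k$ colors, and as $d\le t$ it then contains no rainbow $K_t$ either. Hence the ``no rainbow clique'' requirement holds automatically in both definitions, and $g(k,t)$ and $g(k,d)$ both equal the largest $n$ for which $K_n$ admits a $k$-edge-coloring in which every $K_4$ has at least three colors; so $g(k,t)=g(k,d)$.

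Now suppose $k\ge{d\choose 2}$, put $n=g(k,t)$, and fix a $k$-edge-coloring $\chi$ of $K_n$ with no rainbow $K_t$ in which every $K_4$ has at least three colors. Since $g(k-{d\choose 2},t)\ge 1$, if $n\le 2^{{d\choose 2}}$ then $n\le 2^{{d\choose 2}}g(k-{d\choose 2},t)$ and \eqref{eq} holds; so I may assume $n>2^{{d\choose 2}}$. First I would apply Corollary~\ref{veryuseful} to obtain a color $i$ and a set $V_1$ with $|V_1|=m\ge n/(2k)$ in which every vertex has color-$i$ degree at least $n/(2t^4)\ge nt^{-5}$. Then I would apply Lemma~\ref{drc} to the graph of color $i$ with this $V_1$, with $\alpha=t^{-5}$, $\beta=2^{-{d\choose 2}}$, and $h=\lfloor 4\log n/d\rfloor$. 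Using $n>2^{{d\choose 2}}$ and $d\ge 3$ one checks that $h$ is a positive integer with $\frac{2\log m}{d-1}\le h\le\frac{4\log n}{d}$; the lower bound is exactly the hypothesis $\beta\le m^{-d/h}$ of Lemma~\ref{drc} (since $m\le n$), and the upper bound gives $t^{5h}\le n^{20\log t/d}=g(k,t)^{20\log t/d}$, the factor in \eqref{eq}. Lemma~\ref{drc} then produces $V_2\subset V_1$ with $|V_2|\ge\alpha^hm-1=t^{-5h}m-1$ in which every $d$-element subset has at least $\beta n=2^{-{d\choose 2}}n$ common neighbors in color $i$.

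Finally I would split according to whether $\chi$ restricted to $V_2$ has a rainbow $K_d$. If it does not, then $\chi|_{V_2}$ is a $k$-edge-coloring of a complete graph with no rainbow $K_d$ in which every $K_4$ has at least three colors, so by maximality $|V_2|\le g(k,d)$; then $t^{-5h}m-1\le g(k,d)$, hence $m\le(g(k,d)+1)t^{5h}$, and combining $m\ge n/(2k)$, $g(k,d)\ge 1$, and $t^{5h}\le g(k,t)^{20\log t/d}$ gives $n\le 4k\,g(k,t)^{20\log t/d}g(k,d)$, the first term. If there is a $d$-element set $R\subset V_2$ spanning a rainbow $K_d$, let $C_R$ be the set of its ${d\choose 2}$ edge colors and let $W$ be the set of vertices joined to every vertex of $R$ in color $i$, so $|W|\ge 2^{-{d\choose 2}}n$. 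The key claim is that no edge inside $W$ has a color in $C_R$: if $uv$ did, with $\chi(uv)=c\in C_R$, choose $xy\in R$ with $\chi(xy)=c$; since every vertex of $R$ meets $d-1\ge 2$ edges inside $R$ of which at most one has color $i$, we have $W\cap R=\emptyset$, so $u,v,x,y$ are four distinct vertices, the edges $ux,uy,vx,vy$ all have color $i$, and $uv,xy$ both have color $c$, which is a $K_4$ with at most two colors, a contradiction. Hence $\chi|_W$ uses at most $k-{d\choose 2}$ colors, has no rainbow $K_t$, and has every $K_4$ with at least three colors, so $|W|\le g(k-{d\choose 2},t)$, whence $n\le 2^{{d\choose 2}}g(k-{d\choose 2},t)$, the second term. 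In every case \eqref{eq} follows.

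The only genuinely delicate point is the calibration of $\alpha$, $\beta$, $h$ in the dependent-random-choice step: one needs $\alpha^{-h}$ to be at most the target factor $g(k,t)^{20\log t/d}$, $\beta^{-1}$ to equal the target factor $2^{{d\choose 2}}$, and the constraint $\beta\le m^{-d/h}$ to hold all at once, and this is precisely where the hypotheses enter --- $d\ge 40\log t$ forces $20\log t/d<1/2$ so that the first term is meaningful and there is enough room to satisfy the three conditions simultaneously, and $k\ge{d\choose 2}$ is what makes $g(k-{d\choose 2},t)$ well-defined. Everything else (Corollary~\ref{veryuseful}, Lemma~\ref{drc}, and the $K_4$-counting) is invoked as stated, and the preliminary reduction to $n>2^{{d\choose 2}}$ supplies the slack that absorbs the rounding in the definition of $h$, just as floors and ceilings are suppressed elsewhere in the paper.
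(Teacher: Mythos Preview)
Your argument is correct and follows essentially the same route as the paper's proof: apply Corollary~\ref{veryuseful}, then Lemma~\ref{drc} with $\beta=2^{-\binom{d}{2}}$ and $h$ of order $(4\log n)/d$, and split on whether $V_2$ contains a rainbow $K_d$. The only differences are cosmetic---you take $\alpha=t^{-5}$ where the paper uses $\alpha=1/(2t^4)$, you make the floor in $h$ explicit and add the harmless reduction to $n>2^{\binom{d}{2}}$ to absorb it, and you spell out the distinctness of $u,v,x,y$ in the rainbow case---none of which changes the structure of the proof.
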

\begin{proof}
Note that if $k<{d \choose 2}$, then a $k$-edge-coloring cannot have
a rainbow $K_d$. Therefore, $g(k,t)=g(k,d)$ in this case. So we
assume $k \geq {d \choose 2}$. By the definition of $g(k,t)$, there
is a $k$-edge-coloring of $K_n$ with $n=g(k,t)$ with no rainbow
$K_t$ and in which every $K_4$ receives at least three colors.
Consider such a coloring. By Corollary \ref{veryuseful}, there is a
color $i$ and a subset $V_1 \subset V(K_n)$ with $|V_1| \geq
\frac{n}{2k}$ and $d_i(x) \geq \frac{n}{2t^4}$ for every vertex $x
\in V_1$. Apply Lemma \ref{drc} to the graph of color $i$ with
$\alpha=\frac{1}{2t^4}$, $\beta=2^{-{d \choose 2}}$, $m=|V_1| \geq
\frac{n}{2k}$, and $h=4d^{-1}\log n$. We can apply Lemma \ref{drc}
since $\beta<2^{-d^2/4}=n^{-d/h} \leq |V_1|^{-d/h}$. So there is a
subset $V_2 \subset V_1$ such that
$$|V_2| \geq \alpha^h m -1 \geq \alpha^h m/2 \geq (2t^4)^{-4d^{-1}\log
n} \cdot \frac{n}{4k} \geq n^{1-\frac{20\log t}{d}}/(4k)$$ and every
subset of $V_2$ of size $d$ has common neighborhood at least $\beta
n=2^{-{d \choose 2}}n$ in color $i$.

There are two possibilities: either every $K_d$ in $V_2$ is not
rainbow, or there is a $K_d$ in $V_2$ that is rainbow. In the first
case, the $k$-edge-coloring restricted to $V_2$ is free of rainbow
$K_d$, so $$g(k,d) \geq |V_2| \geq n^{1-\frac{20\log t}{d}}/(4k).$$
Since $n=g(k,t)$, we can restate this inequality as
$$g(k,t) \leq 4kg(k,t)^{\frac{20\log t}{d}}g(k,d).$$ In the second case, there is a rainbow $d$-tuple $R \subset V_2$
such that $N_i(R)$, the common neighborhood of $R$ in color $i$, has
cardinality at least $\beta n$. The ${d \choose 2}$ colors present
in $R$ can not be present in $N_i(R)$ since otherwise we would have
a $K_4$ using only two colors (the color $i$ and the color that
appears in both $R$ and in $N_i(R)$). In this case we have
$$g\Big(k-\left(d \atop 2\right),t\Big) \geq |N_i(R)| \geq \beta
n=2^{-{d \choose 2}}g(k,t).$$ In either case we have
$$g(k,t) \leq \max\left(4kg(k,t)^{\frac{20\log t}{d}}g(k,d),2^{{d \choose
2}}g\Big(k-\left(d \atop 2\right),t\Big)\right),$$ which completes
the proof.
\end{proof}

Having finished all the necessary preparation, we are now ready to
prove Theorem \ref{main}, which says that $g(k) \leq 2^{2000k}$ for
$k>2^{100}$. The iterated logarithm $\log^* n$ is defined by $\log^*
n =0$ if $n \leq 1$ and otherwise $\log^* n =1+\log^* \log n$. It is
straightforward to verify that $\log^* n < \log n$ holds for $n>8$.

\vspace{0.2cm}
\noindent
{\bf Proof of Theorem \ref{main}:} Note that $g(k)=g(k,k)$ since no
$k$-edge-coloring contains a rainbow $K_k$. Assume $k>2^{100}$ and
suppose for contradiction that there is a $k$-edge-coloring of $K_n$
with $n=g(k) \geq 2^{2000k}$ such that every $K_4$ has at least
three colors.

Let $t_1=k$, and if $t_i>2^{100}$, let $t_{i+1}=(\log t_i)^2$. We
first exhibit several inequalities which we will use. We have
$t_{i+1} > 100 \log t_i$ and $20\frac{\log t_i}{t_{i+1}} = 20/\log
t_{i} \leq \frac{1}{5}$. Let $\ell$ be the largest positive integer
for which $t_{\ell}$ is defined, so $100<t_{\ell} \leq 2^{100}$.
Note that $\ell<2\log^* k$ as one can easily check that
$t_{j+1}=(\log t_j)^2 =(2 \log \log t_{j-1})^2< \log t_{j-1}$. Since
$\ell<2\log^*k \leq 2\log k$ and $n \geq 2^{2000k}$, then $(4k)^{\ell}<n^{1/12}$. We have
$\sum_{i=1}^{\ell-1} 20/\log t_{i}<1/4$ since the largest term in
the sum is $20/\log t_{\ell-1}<1/5$, and $20/\log
t_{\ell-i}<2^{-5i}$ for $2 \leq i \leq \ell-1$. Putting this
together, we have
$$(4k)^{\ell-1}n^{\sum_{i=1}^{\ell-1}20/\log t_{i}}<n^{1/3}.$$

To get an upper bound on $g(k,k)$ we repeatedly apply Lemma
\ref{mainhelp}. Given $k'$ and $t=t_i$, to bound $g(k',t)$, we use
this lemma with $d=t_{i+1}$. Note that we have $d=t_{i+1}>100\log
t_i$, so indeed the condition of the lemma holds. If $k'<{t_{i+1}
\choose 2}$, then $g(k',t_i)=g(k',t_{i+1})$. Otherwise, we
have one of two possible upper bounds given by (\ref{eq}). If the
maximum of the two terms in (\ref{eq}) is the left bound, then
$$g(k',t) \leq 4k'g(k',t)^{\frac{20\log t}{d}}g(k',d) \leq
4kn^{\frac{20\log t}{d}}g(k',d)=4kn^{20/\log t_i}g(k',d),$$
otherwise we have $g(k',t) \leq 2^{j}g(k'-j,t)$ with $j={d \choose
2}$. Since $\frac{g(k',t)}{g(k',d)} \leq 4kn^{20/\log t_i}$, we can
only accumulate up to a total upper bound factor of
$$\prod_{i=1}^{l-1} 4kn^{20/\log
t_i}=(4k)^{\ell-1}n^{\sum_{i=1}^{\ell-1}20/\log t_{i}}<n^{1/3}$$ in
all of the applications of the left bound. When we use the right
bound, we pick up a factor of $\frac{g(k',t)}{g(k'-j,t)} \leq 2^j$
with $j={d \choose 2}$ and also decrease $k'$ by $j$. So this can only give another multiplicative
factor of at most $2^k$ in all of the applications of the right
bound.

Notice that when we finish repeatedly applying Lemma \ref{mainhelp}
we end up with a term  of the form
$g(k_0,t_{\ell})$ with $k_0 \leq k$. In that case, we use that $t_{\ell} \leq 2^{100}$ together with
Lemma \ref{lastpiece} to bound it by $g(k,t_{\ell}) \leq
t_{\ell}^{12k} \leq 2^{1200k}$. Putting this all together, we obtain
the upper bound
$$n = g(k)=g(k,k)< n^{1/3}2^kg(k,t_{\ell})<
2^{1201k}n^{1/3},$$ which implies that $n<2^{2000k}$. This completes the proof.
\qed

\section{Monochromatic or Rainbow Cliques}
\label{monorainbow}

In this section, we prove bounds on the smallest $n$, denoted by
$M(k,t,s)$, such that every $k$-edge-coloring of $K_n$ contains a
monochromatic $K_t$ or a rainbow $K_s$. The following proposition is
a straightforward generalization of Lemma \ref{lastpiece}.

\begin{proposition}\label{fun}
We have $M(k,t,s) \leq s^{4kt}$.
\end{proposition}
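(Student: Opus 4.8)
The plan is to mimic the proof of Lemma~\ref{lastpiece}, replacing the rainbow clique size $d$ there by $s$ and iterating long enough to extract a monochromatic $K_t$ rather than a monochromatic $K_4$. Suppose for contradiction that a $k$-edge-coloring of $K_n$ with $n \geq s^{4kt}$ has no monochromatic $K_t$ and no rainbow $K_s$. Since there is no rainbow $K_s$, Lemma~\ref{useful1} (applied with $t$ replaced by $s$ and $\delta = s^{-4}$, noting $\frac58 s^4 \binom{n}{s} < \binom{n}{s}$) forces the existence of a vertex of degree at least $n/s^4$ in some color.

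Next I would build, greedily, a long sequence of vertices $x_1, x_2, \dots$, colors $c_1, c_2, \dots$, and nested neighborhoods $V(K_n) \supset N_1 \supset N_2 \supset \cdots$ exactly as in Lemma~\ref{lastpiece}: having chosen $x_j, c_j, N_j$, apply Lemma~\ref{useful1} to the coloring restricted to $N_j$ to find a vertex $x_{j+1} \in N_j$ incident to at least $|N_j|/s^4$ edges of some color $c_{j+1}$ inside $N_j$; set $N_{j+1}$ to be that monochromatic neighborhood, so $|N_{j+1}| \geq |N_j|/s^4$. To guarantee a monochromatic $K_t$ we need one color to occur at least $t-1$ times among the $c_j$, which by pigeonhole is ensured once we run the sequence for $k(t-1)$ steps; then $|N_{k(t-1)}| \geq (s^{-4})^{k(t-1)} n \geq s^{4kt} \cdot s^{-4k(t-1)} \geq s^{4k} \geq 1$ (using $n \geq s^{4kt}$ and $s,t\geq 1$, with a crude slack), so there is still a surviving vertex in the final neighborhood. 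Picking the $t-1$ indices $j_1 < \cdots < j_{t-1}$ with $c_{j_1} = \cdots = c_{j_{t-1}} =: c$ together with any vertex of the final nonempty neighborhood yields a monochromatic $K_t$ in color $c$, because each $x_{j_a}$ is joined in color $c$ to every later vertex of the sequence and to the whole of $N_{j_a} \supseteq N_{k(t-1)}$. This contradicts the assumption, so $M(k,t,s) \leq s^{4kt}$.

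The only real points to be careful about are: (i) checking that Lemma~\ref{useful1} with $\delta = s^{-4}$ genuinely produces a high-degree vertex — it does, since otherwise the number of non-rainbow $K_s$ is at most $\frac58\binom{n}{s} < \binom{n}{s}$, forcing a rainbow $K_s$ and contradicting our hypothesis — and this requires $n$ large enough for $K_s$ to make sense, which follows from $n \geq s^{4kt} \geq s$; and (ii) bookkeeping the exponent so that after $k(t-1)$ halving-type steps the bound $n \geq s^{4kt}$ still leaves the final neighborhood nonempty. I expect step (ii), i.e., fitting the arithmetic to the clean exponent $4kt$, to be the only mild obstacle, and it is handled by the generous slack between $s^{4kt}$ and $s^{4k(t-1)}$. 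Everything else is a direct transcription of the Lemma~\ref{lastpiece} argument.
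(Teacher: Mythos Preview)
Your argument is correct and is essentially the paper's own approach. The paper packages the iteration as a one-step recursion $M_s(t_1,\ldots,t_k)\le s^4\max_i M_s(t_1,\ldots,t_i-1,\ldots,t_k)$ on per-color thresholds, but unrolling that recursion is exactly your greedy nested-neighborhood sequence with pigeonhole at the end --- indeed the paper itself describes the proposition as a straightforward generalization of Lemma~\ref{lastpiece}.
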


Let $M_s(t_1,\ldots,t_k)$ be the maximum $n$ such that there is a
$k$-edge-coloring of $K_n$ with colors $\{1,\ldots,k\}$ without a
rainbow $K_s$ and without a monochromatic $K_{t_i}$ in color $i$ for
$1 \leq i \leq k$. The above proposition follows from repeated
application of the following recursive bound.

\begin{lemma} We have
$$M_s(t_1,\ldots,t_k) \leq s^4\max_{1 \leq i \leq k}
M_s(t_1,\ldots,t_i-1,\ldots,t_k).$$
\end{lemma}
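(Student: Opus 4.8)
The plan is to mimic the argument used to prove Lemma~\ref{lastpiece} (and Corollary~\ref{veryuseful}), replacing the single bound $\delta = d^{-4}$ by a bound tailored to forcing a monochromatic clique of the \emph{prescribed} size in some color. Concretely, set $n = M_s(t_1,\ldots,t_k)$ and take a $k$-edge-coloring of $K_n$ with no rainbow $K_s$ and no monochromatic $K_{t_i}$ in color $i$. First I would apply Lemma~\ref{useful1} with $t = s$ and $\delta = s^{-4}$: since the coloring has no rainbow $K_s$, the number of non-rainbow copies is at least ${n \choose s}$, so the hypothesis $d_j(x) \le s^{-4}n$ for all $x$ and all colors $j$ must fail. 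Hence there is a vertex $x$ and a color $i$ with $d_i(x) \ge n/s^4$. Let $N$ be the color-$i$ neighborhood of $x$, so $|N| \ge n/s^4$.

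Now restrict the coloring to $N$. Inside $N$, still no rainbow $K_s$ and no monochromatic $K_{t_j}$ in color $j$; moreover, every monochromatic $K_{t_i - 1}$ in color $i$ inside $N$ together with $x$ would give a monochromatic $K_{t_i}$ in color $i$, which is forbidden. Therefore the restriction to $N$ is a $k$-coloring with no rainbow $K_s$, no monochromatic $K_{t_j}$ in color $j$ for $j \ne i$, and no monochromatic $K_{t_i - 1}$ in color $i$. By definition of $M_s$, this forces
$$\frac{n}{s^4} \le |N| \le M_s(t_1,\ldots,t_i - 1,\ldots,t_k),$$
which rearranges to exactly the claimed bound $M_s(t_1,\ldots,t_k) \le s^4 \max_{1 \le i \le k} M_s(t_1,\ldots,t_i-1,\ldots,t_k)$. (If $s < 3$ there is nothing to prove since a rainbow $K_2$ is a single edge; and if some $t_i \le 2$ then a monochromatic $K_{t_i}$ is a single edge of color $i$, so $M_s(\ldots) = 0$ and the inequality holds trivially — these degenerate base cases terminate the recursion in Proposition~\ref{fun}.)

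The only subtlety I anticipate is the bookkeeping in the application of Lemma~\ref{useful1}: one needs the restricted coloring on the vertex set $V''$ of size $\ge n$ (here all of $V(K_n)$) to have fewer than the trivial number of non-rainbow $K_s$'s under the degree hypothesis, and $\tfrac58 s^{-4} s^4 {n \choose s} = \tfrac58 {n \choose s} < {n \choose s}$ gives the contradiction cleanly. This is genuinely routine given Lemma~\ref{useful1}. The step that carries the real content — though it too is short — is the observation that passing to the color-$i$ neighborhood of $x$ decreases exactly the parameter $t_i$ by one while leaving the rainbow constraint and the other monochromatic constraints intact; this is what makes the recursion productive. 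Iterating the bound $\sum_i t_i - k$ times (each step decrements some $t_i$ until it hits $2$) and using the base case yields $M_s(t_1,\ldots,t_k) \le s^{4(\sum_i t_i)}$, and specializing to $t_1 = \cdots = t_k = t$ gives $M(k,t,s) = M_s(t,\ldots,t) \le s^{4kt}$, proving Proposition~\ref{fun}.
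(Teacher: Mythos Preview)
Your proof is correct and follows essentially the same approach as the paper's: apply Lemma~\ref{useful1} with $\delta=s^{-4}$ to find a vertex of degree at least $n/s^4$ in some color $i$, then pass to its color-$i$ neighborhood where the parameter $t_i$ drops by one. The paper's proof is simply a terser version of exactly this argument.
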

\begin{proof}
By Lemma \ref{useful1}, for every edge-coloring of $K_n$ without a
rainbow $K_s$, there is a vertex $v$ with degree at least $n/s^4$ in
some color $i$. If the coloring of $K_n$ does not contain a
monochromatic $K_{t_i}$ in color $i$, then the neighborhood of $v$
in color $i$ has at least $n/s^4$ vertices and does not contain
$K_{t_i-1}$ in color $i$, completing the proof.
\end{proof}

Using a slightly better estimate by Alon et al. \cite{AlJiMiPr}
(which we mentioned earlier) instead of Lemma \ref{useful1}, one can
improve the constant in the exponent of the above proposition from
$4$ to $3$. Together with the next lemma, Proposition \ref{fun}
determines $M(k,t,3)$ up to a constant factor in the exponent.

\begin{lemma} \label{prop1}
For all positive integers $k$ and $t$ with $k$ even and $t \geq 3$,
we have $M(k,t,3)> 2^{kt/4}$.
\end{lemma}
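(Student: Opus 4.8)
The plan is to prove the lower bound $M(k,t,3) > 2^{kt/4}$ by an explicit product construction. Since $k$ is even, write $k = 2r$ and pair up the colors into $r$ pairs, say $\{2j-1, 2j\}$ for $1 \le j \le r$. The idea is to build a coloring of $K_n$ with $n = 2^{kt/4} = 2^{rt/2}$ by taking a ``$2$-bit per coordinate'' encoding of vertices. Concretely, I would identify the vertex set with $(\mathbb{Z}/(t/2+1))^{r}$ — wait, more cleanly: identify each vertex with a string of $r$ blocks, where each block is an element of $\{1, \dots, 2^{t/2}\}$, and $2^{rt/2} = 2^{kt/4}$ gives the right count. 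For a pair of distinct vertices $u = (u_1, \dots, u_r)$ and $v = (v_1, \dots, v_r)$, let $j$ be the first coordinate in which they differ; within block $j$, the two distinct values $u_j, v_j \in \{0,1\}^{t/2}$ differ in some bit, and I use the index/parity of the first differing bit to decide whether the edge gets color $2j-1$ or color $2j$. This is essentially iterating the standard $2$-coloring lower bound construction for $R(t/2)$-type bounds blockwise.

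The key steps, in order: first, set up the vertex encoding so that $n = 2^{kt/4}$ and each edge receives a well-defined color in $\{1, \dots, k\}$. Second, verify there is no rainbow triangle: given three vertices, look at the first block $j$ in which not all three agree; the three pairwise ``first difference'' locations all lie in block $j$ or later, and in fact among the three pairs, at least two have their first difference in block $j$ — those two edges get colors from $\{2j-1, 2j\}$, so they cannot all three be distinct colors provided the two edges landing in block $j$ don't themselves get the same color automatically; a short case analysis on which of $u,v,w$ agree in block $j$ and then on the bit patterns within block $j$ shows two of the three edges share a color. Third, verify there is no monochromatic $K_t$ in any color $i$: if $i \in \{2j-1, 2j\}$, then all edges of a monochromatic $K_t$ in color $i$ have first difference in block $j$, so the $t$ vertices are determined (for the purposes of these edges) by their block-$j$ values, which are $t$ distinct elements of $\{0,1\}^{t/2}$; within that block, the color $2j-1$ vs.\ $2j$ split is exactly the classical construction showing $R(t;2)$-style bound, and an antichain/length argument shows one cannot have $t$ binary strings of length $t/2$ such that all $\binom{t}{2}$ pairwise first-difference bits have the same parity — indeed a monochromatic clique in that blockwise $2$-coloring corresponds to a chain, and a chain of binary strings of length $t/2$ has at most $t/2 + 1 < t$ elements.

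The main obstacle I expect is pinning down the precise blockwise $2$-coloring rule so that both ``no rainbow triangle'' and ``no monochromatic $K_t$'' hold simultaneously with the clean parameter $n = 2^{kt/4}$. The cleanest version: within each block use strings in $\{0,1\}^{\lfloor t/2 \rfloor}$ and color an edge by the \emph{parity of the position} of the first bit where the block-$j$ values differ (color $2j-1$ if even, $2j$ if odd). Then a monochromatic clique in colors $\{2j-1,2j\}$ restricted to block $j$ has all first-difference positions of one fixed parity, which forces the strings to form a chain in a suitable partial order, bounding the clique size by $\lfloor t/2\rfloor + 1 \le t - 1$; and for triangles, among $u,v,w$ with not all block-$j$ values equal, the three first-difference positions $p(u,v), p(u,w), p(v,w)$ (where defined within block $j$) satisfy the ultrametric-type identity that the two largest are equal, so the two corresponding edges get the same parity, hence the same color. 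I would then close by noting $2^{kt/4} = 2^{rt/2}$ and that $M(k,t,3)$ is at least one more than this, giving the strict inequality.

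Actually, to be careful about the ``$>$'' and about the exact size, I would take the vertex set to be $\bigl(\{0,1\}^{t/2}\bigr)^{r}$ of size $2^{rt/2} = 2^{kt/4}$ and conclude that this $k$-coloring of $K_{2^{kt/4}}$ has neither a rainbow $K_3$ nor a monochromatic $K_t$, whence $M(k,t,3) > 2^{kt/4}$ as claimed.
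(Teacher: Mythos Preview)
Your product / lexicographic structure is exactly right and is essentially what the paper does: partition the $k$ colors into $r=k/2$ pairs, take a vertex set that is an $r$-fold product of blocks of size $2^{t/2}$, and color an edge according to the first block in which the endpoints differ, using only the two colors assigned to that block. Your ``no rainbow triangle'' verification is also correct (with one small slip: for the first-difference position the ultrametric identity says the two \emph{smallest} of $p(u,v),p(u,w),p(v,w)$ coincide, not the two largest; the conclusion that two of the three edges share a color is unaffected).

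The genuine gap is in your choice of the blockwise $2$-coloring. You color an edge by the parity of the first bit where the two block values differ, and then assert that a monochromatic clique ``forces the strings to form a chain in a suitable partial order'' of size at most $\lfloor t/2\rfloor+1$. This is false. Fix the bits in all even positions (say to $0$) and let the bits in the odd positions vary freely; this produces $2^{\lceil \ell/2\rceil}$ binary strings of length $\ell=\lfloor t/2\rfloor$ any two of which first differ at an odd position. Hence your parity coloring of $K_{2^{\ell}}$ already contains a monochromatic clique of size $2^{\lceil \ell/2\rceil}$, which is at least $t$ once $t\ge 16$; so your construction \emph{does} contain a monochromatic $K_t$. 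No chain structure is forced: ``same parity of first difference'' only pins down the bits at positions of the other parity, which constrains the strings multiplicatively, not linearly.

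The paper sidesteps this by not trying to be explicit inside a block: it simply invokes Erd\H{o}s's probabilistic bound $R(t;2)>2^{t/2}$ to obtain \emph{some} $2$-coloring of $K_{2^{t/2}}$ with no monochromatic $K_t$, and plugs that in as the blockwise coloring in the same product construction you describe. Your outline becomes a correct proof if you replace the parity rule by this nonconstructive $2$-coloring; producing a fully explicit coloring with these parameters would amount to an explicit Ramsey-graph construction matching the probabilistic bound, which is a well-known open problem.
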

\begin{proof}
To prove the lemma, it suffices by induction to prove $M(k,t,3)-1
\geq 2^{t/2}\left(M(k-2,t,3)-1\right)$ for all $k \geq 2$ and $t
\geq 3$. Consider a $2$-edge-coloring $C_1$ of $K_m$ with
$m=2^{t/2}$ and without a monochromatic $K_t$. Such a
$2$-edge-coloring exists by the well-known lower bound of Erd\H{o}s
\cite{Er1} on the $2$-color Ramsey number $R(t;2)$. Consider also a
$(k-2)$-edge-coloring $C_2$ of $K_r$ with $r=M(k-2,t,3)-1$ without a
rainbow triangle and without a monochromatic $K_t$. We use these two
colorings to make a new edge-coloring $C_3$ of $K_{mr}$ with $k$
colors: we first partition the vertices of $K_{mr}$ into $m$ vertex
subsets $V_1,\ldots,V_m$ each of size $r$, and color any edge
$e=(v,w)$ with $v \in V_i,w \in V_j$, and $i \not = j$ by the color
of $(i,j)$ in the $2$-edge-coloring $C_1$ of $K_m$, and color within
each $V_i$ identical to the coloring $C_2$ of $K_r$. First we show
that coloring $C_3$ has no rainbow triangle. Indeed, consider three
vertices of $K_{mr}$. If all three vertices lie in the same vertex
subset $V_i$, then the triangle between them is not rainbow by the
assumption on coloring $C_2$. If exactly two of the three vertices
lie in the same vertex subset, then the two edges from these
vertices to the third vertex will receive the same color. Finally,
if they lie in three different vertex subsets, then the triangle
between them receives only colors from $C_1$ and is not rainbow
since $C_1$ is a $2$-coloring. Similarly, one can see that coloring
$C_3$ has no monochromatic $K_t$, which completes the proof.
\end{proof}

\section{Concluding Remarks}
\label{concluding}

In this paper we proved that there exists a constant $c$ such that
every $k$-edge-coloring of $K_n$ with $n \geq 2^{ck}$ contains a
$K_4$ whose edges receive at most two colors. On the other hand, for
$n \leq 2^{c(\log k)^2}$, Mubayi constructed a $k$-edge-coloring of
$K_n$ in which every $K_4$ receives at least three colors. There is
still a large gap between these results. We believe that the lower
bound is closer to the truth and the correct growth is likely to be
subexponential in $k$.

Our upper bound is equivalent to $f(n,4,3) \geq (\log n)/4000$ for
$n$ sufficiently large. Kostochka and Mubayi showed that
$f(n,2a,a+1) \geq c_a\frac{\log n}{\log \log \log n}$, where $c_a$
is a positive constant for each integer $a \geq 2$. Like the
Kostochka-Mubayi proof, our proof can be generalized to demonstrate
that for every integer $a \geq 2$ there is $c_a>0$ such that
$f(n,2a,a+1) \geq c_a\log n$ for every positive integer $n$. For
brevity, we do not include the details.

We do not yet have a good understanding of how $M(k,t,s)$, which is
the smallest positive integer $n$ such that every $k$-edge-coloring
of $K_n$ has a monochromatic $K_t$ or a rainbow $K_s$, depends on
$s$. From the definition, it is an increasing function in $s$. For
constant $s$, we showed that $M(k,t,s)$ grows only exponentially in
$k$. On the other hand, for ${s \choose 2}>k$, we have
$M(k,t,s)=R(t;k)$, so understanding the behavior of $M(k,t,s)$ for
large $s$ is equivalent to understanding the classical Ramsey
numbers $R(t;k)$.


\begin{thebibliography}{}

\bibitem{AlJiMiPr}
N. Alon, T. Jiang, Z. Miller and D. Pritikin, Properly colored
subgraphs and rainbow subgraphs in edge-colorings with local
constraints, {\it Random Structures and Algorithms} {\bf 23} (2003),
409-433.

\bibitem{Ax} M. Axenovich,
A Generalized Ramsey problem, {\it Discrete Math.} {\bf 222} (2000),
 247--249.

\bibitem{AxFuMu} M. Axenovich, Z. F\"uredi, and D. Mubayi,
On generalized Ramsey theory: the bipartite case, {\it J. Combin.
Theory Ser. B} {\bf 79} (2000), 66--86.

\bibitem{AxIv}
M. Axenovich and P. Iverson, Edge-colorings avoiding rainbow and
monochromatic subgraphs, to appear in {\it Discrete Math.}

\bibitem{Er1}
P. Erd\H{o}s, Some remarks on the theory of graphs, {\it Bull. Amer.
Math. Soc.} {\bf 53} (1947), 292--294.

\bibitem{Er}
P. Erd\H{o}s, Solved and unsolved problems in combinatorics and
combinatorial number theory, {\it Congressus Numerantium} {\bf 32}
(1981), 49–-62.

\bibitem{ErGy}
P. Erd\H{o}s and A. Gy\'arf\'as, A variant of the classical Ramsey
problem, {\it Combinatorica} {\bf 17} (1997), 459--467.

\bibitem{FoSu}
J. Fox and B. Sudakov, Density theorems for bipartite graphs and
related Ramsey-type results, submitted.

\bibitem{GrRoSp}
R. Graham, B. Rothschild, and J. Spencer, {\it Ramsey Theory}, $2^{\textrm{nd}}$ ed., Wiley, New York, 1990.

\bibitem{KoMu}
A. Kostochka and D. Mubayi, When is an almost monochromatic $K_4$
guaranteed?, submitted.

\bibitem{KoRo}
A. Kostochka and V. R\"odl, On graphs with small Ramsey numbers.
{\it J. Graph Theory} {\bf 37} (2001), 198--204.

\bibitem{Mu}
D. Mubayi, Edge-coloring cliques with three colors on all
$4$-cliques, {\it Combinatorica} {\bf 18} (1998), 293--296.

\bibitem{Mu1}
D. Mubay, An explicit construction for a Ramsey problem, {\it
Combinatorica} {\bf 24} (2004), 313--324.

\bibitem{SaSe}
G. N. S\'ark\"ozy, and S. M. Selkow, On edge colorings with at least
$q$ colors in every subset of $p$ vertices, {\it Electron. J.
Combin.} {\bf 8} (2001), Research Paper 9, 6 pp.

\bibitem{SaSe1}
G. N. S\'ark\"ozy, and S. M. Selkow, An application of the
regularity lemma in generalized Ramsey theory, {\it J. Graph Theory}
{\bf 44} (2003), 39--49.

\bibitem{Su}
B. Sudakov, Few remarks on the Ramsey-Turan-type problems, {\it J.
Combinatorial Theory Ser. B} {\bf 88} (2003), 99--106.

\end{thebibliography}
\end{document}